\newtheorem{theorem}{Theorem}[section]
\newtheorem{lemma}[theorem]{Lemma}
\newtheorem{corollary}[theorem]{Corollary}
\theoremstyle{definition}
\theoremstyle{remark}
\newtheorem{remark}[theorem]{Remark}
\numberwithin{equation}{section}
\begin{document}

\title{A Gr\"uss inequality for $n$-positive linear maps}
\author[M.S. Moslehian, R. Raji\'c]{Mohammad Sal Moslehian$^1$ and Rajna Raji\'c$^2$}

\address{$^1$ Department of Pure Mathematics, Center of Excellence in
Analysis on Algebraic Structures (CEAAS), Ferdowsi University of
Mashhad, P.O. Box 1159, Mashhad 91775, Iran.}
\email{moslehian@ferdowsi.um.ac.ir and moslehian@ams.org}

\address{$^2$ Faculty of Mining, Geology and Petroleum
Engineering, University of Zagreb, Pierottijeva 6, 10000 Zagreb,
Croatia} \email{rajna.rajic@zg.t-com.hr}

\keywords{Gr\"uss inequality, operator inequality, positive
operator, $C^*$-algebra, completely positive map, $n$-positive
map.}

\subjclass[2010]{Primary 46L07; secondary 47A12, 47L25.}

\begin{abstract}
Let $\mathscr{A}$ be a unital $C^*$-algebra and let $\Phi:
\mathscr{A} \to {\mathbb B}({\mathscr H})$ be a unital $n$-positive
linear map between $C^*$-algebras for some $n \geq 3$. We show that
$$\|\Phi(AB)-\Phi(A)\Phi(B)\|
\leq \Delta(A,||\cdot||)\,\Delta(B,||\cdot||)$$ for all operators
$A, B \in \mathscr{A}$, where $\Delta(C,\|\cdot\|)$ denotes the
operator norm distance of $C$ from the scalar operators.
\end{abstract} \maketitle
%--------------------------------------------------------------------------------------------------%

\section{Introduction}

Let ${\mathbb B}({\mathscr H})$ stand for the algebra of all
bounded linear operators on a complex Hilbert space $({\mathscr
H}, \langle \cdot, \cdot \rangle)$, let $\|\cdot\|$ denote the
operator norm and let $I$ be the identity operator. For
self-adjoint operators $A, B$ the order relation $A \leq B$ means
that $\langle A\xi,\xi\rangle\leq \langle
B\xi,\xi\rangle\,\,(\xi\in\mathscr{H})$. In particular, if $0 \leq
A$, then $A$ is called positive. If ${\rm dim}{\mathscr H}=k$, we
identify  ${\mathbb B}({\mathscr H})$ with the algebra
$\mathscr{M}_k$ of all $k \times k$ matrices with entries in
$\mathbb{C}$.

Let $\Delta(C,\|\cdot\|)=\inf_{\lambda\in\mathbb{C}}\|C-\lambda
I\|$ be the $\|\cdot\|$-distance of $C$ from the scalar operators.
It is known that $\Delta(C,\|\cdot\|)\leq \|C\|$ and
$\Delta(C,\|\cdot\|)=c(C)$ for any normal operator $C$, where
$c(C)$ denotes the radius of the smallest disk in the complex
plane containing the spectrum $\sigma (C)$ of $C$; see \cite{STA}.

A linear map $\Phi: {\mathscr A} \to {\mathscr B}$ between
$C^*$-algebras is said to be \emph{positive} if $\Phi(A) \geq 0$
whenever $A \geq 0$. Every positive linear map $\Phi$ satisfies
$\Phi(A^*)=\Phi(A)^*$ for all $A.$ We say that $\Phi$ is unital if
${\mathscr A}, {\mathscr B}$ are unital $C^*$-algebras and $\Phi$
preserves the identity. A linear map $\Phi$ is called
$n$-\emph{positive} if the map $\Phi_n: M_n({\mathscr A}) \to
M_n({\mathscr B})$ defined by $\Phi_n([a_{ij}]_{n \times
n})=[\Phi(a_{ij})]_{n \times n}$ is positive, where $M_n({\mathscr
A})$ stands for the $C^*$-algebra of $n \times n$ matrices with
entries in ${\mathscr A}$. It is known that $\|\Phi\|=1$ for any
unital $n$-positive linear map $\Phi$. $\Phi$ is said to be
\emph{completely positive} if it is $n$-positive for every $n\in
\mathbb{N}$. For a comprehensive account on completely positive
maps see \cite{PAU}.

The Gr\" uss inequality \cite{GRU}, as a complement of Chebyshev's
inequality, states that if $f$ and $g$ are integrable real functions
on $[a, b]$ and there exist real constants $\varphi, \phi, \gamma,
\Gamma$ such that $\varphi \leq f(x) \leq \phi$ and $\gamma \leq
g(x) \leq \Gamma$ hold for all $x \in [a, b]$, then
$$\left|\frac{1}{b-a} \int_a^b f(x)g(x)dx - \frac{1}{(b-a)^2} \int_a^b f(x)dx
\int_a^b g(x)dx \right|\leq \frac{1}{4}(\phi - \varphi)(\Gamma -
\gamma)\,.$$ This inequality has been investigated, applied and
generalized by many mathematicians in different areas of
mathematics; see \cite{DRA} and references therein. Peri\'c and
Raji\'c proved a Gr\" uss type inequality for unital completely
bounded maps \cite{P-R}.

In what follows $\mathscr{A}$ will stand for a unital
$C^*$-algebra. In this paper we prove that if $\Phi: \mathscr{A}
\to {\mathbb B}({\mathscr H})$ is a unital $n$-positive linear map
between $C^*$-algebras for some $n \geq 3$, then
\begin{eqnarray}
\|\Phi(AB)-\Phi(A)\Phi(B)\|
\leq \Delta(A,\|\cdot\|)\,\Delta(B,\|\cdot\|)
\end{eqnarray}
for all operators $A, B\in \mathscr{A}$.

\section{Main result}

To achieve our main result we need three lemmas. The first lemma can
be deduced from \cite[Theorem 2]{MAT} by adding a necessary
assumption $\Phi(A^*)=\Phi(A)^*$. We state it for the sake of
convenience.

\begin{lemma}\label{lemma1}
Let $\Phi: {\mathscr A} \to {\mathbb B}({\mathscr H})$ be a unital
$*$-$n$-positive \textup{(}not necessarily linear\textup{)} map
for some $n \geq 3$. Then
\begin{eqnarray}\label{cauchy}
\|\Phi(AB)-\Phi(A)\Phi(B)\|^2 \leq
\|\Phi(|A^*|^2)-|\Phi(A^*)|^2\|\,\|\Phi(|B|^2)-|\Phi(B)|^2\|
\end{eqnarray}
for all operators $A,B\in {\mathscr A}.$
\end{lemma}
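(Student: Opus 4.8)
The plan is to deduce the whole estimate from the positivity of a single $3\times3$ operator matrix, obtained by applying $\Phi$ to a rank-one positive matrix, and then to extract the bound by a Schur-complement reduction followed by the standard norm estimate for positive $2\times2$ block matrices. The hypothesis $n\ge3$ is used exactly once, to guarantee that $\Phi$ respects $3\times3$ positivity.

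First I would note that $n$-positivity with $n\ge3$ forces $\Phi_3$ to map positive elements of $M_3(\mathscr A)$ to positive elements, with no appeal to linearity: if $M\ge0$ in $M_3(\mathscr A)$, then $M\oplus0_{n-3}\ge0$ in $M_n(\mathscr A)$, so $\Phi_n(M\oplus0_{n-3})\ge0$, and its top-left $3\times3$ principal block, which is precisely $\Phi_3(M)$, is positive as a principal submatrix of a positive matrix. With this in hand, put
\[
V=\begin{pmatrix}1\\ A\\ B^{*}\end{pmatrix},\qquad VV^{*}=\begin{pmatrix}1 & A^{*} & B\\ A & AA^{*} & AB\\ B^{*} & B^{*}A^{*} & B^{*}B\end{pmatrix}\ \ge\ 0,
\]
and apply $\Phi_3$. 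Using unitality, $\Phi(1)=I$, together with the $*$-preservation $\Phi(X^{*})=\Phi(X)^{*}$ encoded in the hypothesis to rewrite the first row and the lower entries, I obtain
\[
P:=\begin{pmatrix}I & \Phi(A)^{*} & \Phi(B)\\ \Phi(A) & \Phi(|A^{*}|^{2}) & \Phi(AB)\\ \Phi(B)^{*} & \Phi(AB)^{*} & \Phi(|B|^{2})\end{pmatrix}\ \ge\ 0 .
\]

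Next I would strip off the first row and column. Writing $P=\left(\begin{smallmatrix} I & R\\ R^{*} & Q\end{smallmatrix}\right)$ with $R=\begin{pmatrix}\Phi(A)^{*} & \Phi(B)\end{pmatrix}$, the $(1,1)$ block is the invertible operator $I$, so the congruence by $\left(\begin{smallmatrix} I & 0\\ -R^{*} & I\end{smallmatrix}\right)$ shows that the Schur complement $Q-R^{*}R$ is positive. Since $\Phi(X^{*})=\Phi(X)^{*}$ gives $\Phi(A)\Phi(A)^{*}=|\Phi(A^{*})|^{2}$ and $\Phi(B)^{*}\Phi(B)=|\Phi(B)|^{2}$, this positivity reads
\[
\begin{pmatrix}\Phi(|A^{*}|^{2})-|\Phi(A^{*})|^{2} & \Phi(AB)-\Phi(A)\Phi(B)\\ \bigl(\Phi(AB)-\Phi(A)\Phi(B)\bigr)^{*} & \Phi(|B|^{2})-|\Phi(B)|^{2}\end{pmatrix}\ \ge\ 0 .
\]

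Finally I would invoke the elementary fact that a positive block matrix $\left(\begin{smallmatrix} X & Z\\ Z^{*} & Y\end{smallmatrix}\right)\ge0$ obeys $\|Z\|^{2}\le\|X\|\,\|Y\|$: testing positivity on vectors of the form $(s\xi,t\eta)$ yields $|\langle Z\eta,\xi\rangle|^{2}\le\langle X\xi,\xi\rangle\,\langle Y\eta,\eta\rangle\le\|X\|\,\|Y\|$ for unit vectors $\xi,\eta$, and taking the supremum over such $\xi,\eta$ gives the bound. Applying it to the displayed $2\times2$ matrix is exactly \eqref{cauchy}. I expect the only delicate point to be the bookkeeping in the first step: one must check that $\Phi_3(VV^{*})$ really has the entries of $P$, and here the assumption $\Phi(X^{*})=\Phi(X)^{*}$ is indispensable, since it is what identifies the lower entries with $\Phi(A)^{*}$ and $\Phi(AB)^{*}$ and converts $\Phi(A)\Phi(A)^{*}$ into $|\Phi(A^{*})|^{2}$, so that the diagonal of the Schur complement matches the variances in the stated right-hand side. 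This is precisely the extra hypothesis the authors add to \cite{MAT}.
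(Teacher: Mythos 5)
Your proposal is correct and follows essentially the same route as the paper: both reduce $n$-positivity to $3$-positivity by zero-padding, apply $\Phi_3$ to a rank-one positive matrix having the identity as an entry, take the Schur complement with respect to the identity block to obtain a positive $2\times 2$ block matrix (the covariance--variance inequality), and finish with the estimate $\|Z\|^2\leq\|X\|\,\|Y\|$ for positive block matrices. The only differences are cosmetic: the paper uses the vector $(A^*,B^*,I)$ and so must substitute $A\mapsto A^*$ at the end, citing the Schur-complement criterion and the norm bound from the literature, whereas your vector $(1,A,B^*)$ yields \eqref{cauchy} directly and you prove those two auxiliary facts inline.
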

\begin{proof}
Let $A$ and $B$ be two operators in ${\mathscr A}.$ We have
\begin{eqnarray*}
0 \leq \left[\begin{array}{cccccc} A^*\\B^*\\I\\0\\ \vdots\\0
\end{array}\right]\left[\begin{array}{cccccc}
A&B&I&0&\cdots&0
\end{array}\right]=\left[\begin{array}{cccccc}
A^*A & A^*B & A^*& 0&\cdots & 0\\
B^*A& B^*B & B^*& 0 &\cdots & 0\\
A& B& I& 0& \cdots&0\\
\vdots&\vdots & \vdots & \vdots&\ddots &\vdots\\
0& 0 & 0 &0&\cdots &0
\end{array}\right].
\end{eqnarray*}
Due to $\Phi$ is $n$-positive, we get
\begin{eqnarray*}
0 \leq \left[\begin{array}{cccccc}
\Phi(A^*A) & \Phi(A^*B) & \Phi(A^*)& 0&\cdots & 0\\
\Phi(B^*A)& \Phi(B^*B) & \Phi(B^*)& 0 &\cdots & 0\\
\Phi(A)& \Phi(B)& \Phi(I)& 0& \cdots&0\\
\vdots&\vdots & \vdots & \vdots&\ddots &\vdots\\
0& 0 & 0 &0&\cdots &0
\end{array}\right]\,,
\end{eqnarray*}
whence
\begin{eqnarray}\label{3x3}
0 \leq \left[\begin{array}{cccc}
\Phi(A^*A) & \Phi(A^*B) & \vline&\Phi(A)^*\\
\Phi(B^*A)& \Phi(B^*B) & \vline&\Phi(B)^*\\
\hline&&\vline\\
\Phi(A)& \Phi(B)&\vline& I
\end{array}\right].
\end{eqnarray}

It is known that the matrix $\left[\begin{array}{cc}
R&T\\T^*&S\end{array}\right]$ is positive if and only if $R,S$ are
positive and $R \geq TS^{-1}T^*$, where $S^{-1}$ denotes the
(generalized) inverse of $S$. Using this fact and noting to
\eqref{3x3} we get
\begin{eqnarray*}
\left[\begin{array}{cc}
\Phi(A^*A) & \Phi(A^*B) \\
\Phi(B^*A)& \Phi(B^*B)\\
\end{array}\right] \geq
\left[\begin{array}{cc}
\Phi(A)^*\\\Phi(B)^*\end{array}\right]\,I^{-1}\,\left[\begin{array}{cc}
\Phi(A)&\Phi(B)\end{array}\right] = \left[\begin{array}{cc}
\Phi(A)^*\Phi(A) & \Phi(A)^*\Phi(B) \\
\Phi(B)^*\Phi(A)& \Phi(B)^*\Phi(B)\\
\end{array}\right]
\end{eqnarray*}
or equivalently
\begin{eqnarray}\label{cov}
\left[\begin{array}{cc}
\Phi(A^*A)-\Phi(A)^*\Phi(A) & \Phi(A^*B)-\Phi(A)^*\Phi(B) \\
\Phi(B^*A)-\Phi(B)^*\Phi(A)& \Phi(B^*B)-\Phi(B)^*\Phi(B)\\
\end{array}\right] \geq 0
\end{eqnarray}
As noted in \cite{B-D}, the inequality \eqref{cov} implies that
\begin{eqnarray}\label{norm}
\|\Phi(A^*B)-\Phi(A)^*\Phi(B)\|^2 \leq
\|\Phi(|A|^2)-|\Phi(A)|^2\|\,\|\Phi(|B|^2)-|\Phi(B)|^2\|
\end{eqnarray}
Replacing $A$ by $A^*$ in \eqref{norm} we obtain \eqref{cauchy}.
\end{proof}

\begin{remark}
The inequality \eqref{cov} is known as the operator
covariance-variance inequality; see \cite{A-B-M} and references
therein for more information.
\end{remark}

%--------------------------------------------------------------------------------------------------%
The second lemma includes our main idea.

\begin{lemma}\label{lemma2}
Let $\Phi: \mathscr{A} \to {\mathbb B}({\mathscr H})$ be a unital
positive linear map between $C^*$-algebras. Then
\begin{eqnarray}\label{onlya2}
\|\Phi(A^*A)-\Phi(A)^*\Phi(A)\|\leq \Delta(A,\|\cdot\|)^2
\end{eqnarray}
for every normal operator $A \in \mathscr{A}$.
\end{lemma}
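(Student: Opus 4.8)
The plan is to prove the estimate $\|\Phi(A^*A)-\Phi(A)^*\Phi(A)\|\leq \Delta(A,\|\cdot\|)^2$ for a normal operator $A$. The key observation is that the quantity on the left, $\Phi(A^*A)-\Phi(A)^*\Phi(A)$, is invariant under translating $A$ by a scalar multiple of the identity. Indeed, if I replace $A$ by $A-\lambda I$ for any $\lambda\in\mathbb{C}$, then since $\Phi$ is unital and linear I have $\Phi(A-\lambda I)=\Phi(A)-\lambda I$ and $\Phi((A-\lambda I)^*(A-\lambda I))=\Phi(A^*A)-\bar\lambda\Phi(A)-\lambda\Phi(A)^*+|\lambda|^2 I$. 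Expanding $\Phi(A-\lambda I)^*\Phi(A-\lambda I)=\Phi(A)^*\Phi(A)-\bar\lambda\Phi(A)-\lambda\Phi(A)^*+|\lambda|^2 I$ and subtracting, all the $\lambda$-dependent terms cancel, so the expression $\Phi(A^*A)-\Phi(A)^*\Phi(A)$ equals $\Phi((A-\lambda I)^*(A-\lambda I))-\Phi(A-\lambda I)^*\Phi(A-\lambda I)$ for every $\lambda$.

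Given this invariance, I would next bound the expression directly. The operator $\Phi(A^*A)-\Phi(A)^*\Phi(A)$ is positive: this follows from the positivity of $\Phi$ applied to $0\le(A-\lambda I)^*(A-\lambda I)$ and the Kadison–Schwarz-type variance inequality embedded in \eqref{cov} (taking $A=B$ there gives $\Phi(A^*A)-\Phi(A)^*\Phi(A)\ge 0$). For a positive operator its norm satisfies $\|\Phi(A^*A)-\Phi(A)^*\Phi(A)\|\le\|\Phi(A^*A)\|$ once one discards the subtracted positive term $\Phi(A)^*\Phi(A)\ge 0$; more carefully, $0\le\Phi(A^*A)-\Phi(A)^*\Phi(A)\le\Phi(A^*A)$, and since $\Phi$ is positive and contractive (as $\|\Phi\|=1$ for unital positive maps), $\|\Phi(A^*A)\|\le\|A^*A\|=\|A\|^2$.

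Combining the two ideas is where the normality of $A$ enters. Applying the norm bound from the previous paragraph to the translated operator $A-\lambda I$ rather than to $A$ itself, and using the invariance from the first paragraph, I obtain
\begin{eqnarray*}
\|\Phi(A^*A)-\Phi(A)^*\Phi(A)\|\leq \|A-\lambda I\|^2
\end{eqnarray*}
for \emph{every} $\lambda\in\mathbb{C}$. Taking the infimum over $\lambda$ on the right-hand side yields exactly $\Delta(A,\|\cdot\|)^2$, which is the desired conclusion.

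The main subtlety I expect concerns the step $0\le\Phi(A^*A)-\Phi(A)^*\Phi(A)\le\Phi(A^*A)$ and the contractivity bound $\|\Phi(A^*A)\|\le\|A\|^2$. The lower bound is the variance inequality, and the upper bound reduces to $\|\Phi\|\le 1$ for unital positive maps together with $\|A^*A\|=\|A\|^2$. In fact the hypothesis that $A$ be normal may not be strictly needed for this chain of inequalities, but I would keep it since the statement is phrased for normal $A$ and it guarantees $\Delta(A,\|\cdot\|)$ coincides with the spectral radius $c(A)$, which is what makes the bound sharp and useful for the final theorem.
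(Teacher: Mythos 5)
Your overall strategy is genuinely different from the paper's and, once one step is repaired, it works. The paper proves the lemma by restricting $\Phi$ to the commutative $C^*$-subalgebra generated by $A$, $A^*$ and $I$ (where, by Choi's theorem, the restriction is completely positive), invoking the Stinespring dilation $\Phi(\cdot)=V^*\pi(\cdot)V$, and reading off the bound $\|A-\lambda I\|\,\|A-\mu I\|$ from the factorization through the projection $I-VV^*$. You instead use the translation invariance of the ``variance'' $\Phi(A^*A)-\Phi(A)^*\Phi(A)$ together with the sandwich $0\le \Phi(B^*B)-\Phi(B)^*\Phi(B)\le \Phi(B^*B)$ and the contractivity of $\Phi$ on positive elements; this avoids dilation theory entirely and is arguably more elementary. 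The translation-invariance computation, the upper bound $\le\Phi(B^*B)$, the estimate $\|\Phi(B^*B)\|\le\|B\|^2$, and the final infimum are all correct.

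The genuine gap is your justification of the lower bound $\Phi(A^*A)-\Phi(A)^*\Phi(A)\ge 0$. You appeal to the covariance--variance inequality \eqref{cov}, but that inequality was established in Lemma~\ref{lemma1} under the hypothesis that $\Phi$ is $n$-positive for some $n\ge 3$ (even the $A=B$ case needs $2$-positivity), whereas Lemma~\ref{lemma2} assumes only that $\Phi$ is positive. As written, the argument imports a conclusion that is not available under the stated hypotheses. The fact you need is true for \emph{normal} $A$ --- it is precisely Choi's Schwarz inequality for unital positive maps \cite{CHO}, or it can be obtained by the same commutative-subalgebra reduction the paper uses --- so the gap is fixable by citing that result instead of \eqref{cov}. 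Relatedly, your closing remark that normality ``may not be strictly needed'' is wrong: for the transpose map $\Phi$ on $\mathscr M_2$ (positive but not $2$-positive) and $A=E_{12}$ one computes $\Phi(A^*A)-\Phi(A)^*\Phi(A)=E_{22}-E_{11}$, which is not positive, so the very first inequality in your chain fails for non-normal $A$. Normality is exactly what rescues the Schwarz inequality for merely positive maps, and hence your whole argument.
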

\begin{proof}
Let $A \in {\mathscr A}$ be a normal operator, and let $\mathscr{C}$
denote the $C^*$-algebra generated by $A, A^*$ and $I$. Then
$\mathscr{C}$ is commutative, so that the restriction of $\Phi$ to
$\mathscr{C}$ is completely positive by a known fact due to Choi;
see \cite{CHO, PAU}. The Stinespring dilation theorem states that
for any unital completely positive map $\Phi: {\mathscr C} \to
{\mathbb B}({\mathscr H})$ there exist a Hilbert space
$\mathscr{K}$, an isometry $V: {\mathscr H} \to {\mathscr K}$ and a
unital $*$-homomorphism $\pi: {\mathscr C} \to {\mathbb B}({\mathscr
K})$ such that $\Phi(A)=V^*\pi(A)V$. Now we have
\begin{eqnarray*}
\|\Phi(A^*A)&-&\Phi(A)^*\Phi(A)\|\\
&=& \|\Phi\big((A-\lambda I)^*(A-\mu I)\big)-(\Phi(A-\lambda
I))^*\Phi(A
-\mu I)\|\\
&=&\|V^*\pi\big((A-\lambda I)^*(A
-\mu I)\big)V - V^*(\pi(A-\lambda I))^*VV^*\pi(A-\mu I)V\|\\
&=&\|V^*(\pi(A-\lambda I))^*(I-VV^*)\pi(A-\mu I)V\|\quad\quad(\pi\textrm{~is~$*$-homomorphism})\\
&\leq& \|\pi(A-\lambda I)\|\,\|\pi(A-\mu I)\|\qquad\qquad\qquad\quad\quad (I-VV^*~\textrm{is~a~projection})\\
&\leq& \|A-\lambda I\|\,\|A-\mu I\|\qquad\qquad\qquad\qquad(\pi\textrm{~is~unital~and~norm~decreasing})\\
\end{eqnarray*}
for all $\lambda,\mu \in \mathbb C.$ From this it follows that
$$\|\Phi(A^*A)-\Phi(A)^*\Phi(A)\|\leq \inf_{\lambda\in \mathbb
C}\|A-\lambda I\|\,\inf_{\mu\in \mathbb C}\|A-\mu
I\|=\Delta(A,\|\cdot\|)^2.$$
\end{proof}

\begin{remark}
Passing the proof of previous lemma, one can easily deduce that in
the case of a unital completely positive map $\Phi: \mathscr{A} \to
{\mathbb B}({\mathscr H})$ the statement of Lemma~\ref{lemma2} is
valid for every $A\in {\mathscr A}.$
\end{remark}
%--------------------------------------------------------------------------------------------------%
The next lemma is well known.
\begin{lemma}\label{lemma3}
\cite[Theorem 1]{K-P} Let $A \in \mathscr{A}$ and $\|A\| < 1-(2/m)$
for some integer $m$ greater than $2$. Then there are $m$ unitary
elements $U_1, \dots, U_m \in {\mathscr A}$ such that
$mA=U_1+\cdots+U_m$.
\end{lemma}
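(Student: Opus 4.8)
I would prove this independently of the Gr\"uss machinery, since it is exactly the sharp quantitative form of the Russo--Dye theorem. First I would reformulate the conclusion: dividing by $m$, the claim is equivalent to saying that every $A$ with $\|A\| < 1 - 2/m$ is the \emph{mean} $\frac{1}{m}(U_1 + \cdots + U_m)$ of $m$ unitaries, and writing $C = mA$ this becomes the statement that every $C \in \mathscr{A}$ with $\|C\| < m - 2$ is a sum of $m$ unitaries. The atomic building block is the self-adjoint case: if $H = H^*$ with $\|H\| \le 1$, then $U := H + i(I - H^2)^{1/2}$ is unitary (by the functional calculus, since $0 \le H^2 \le I$) and $H = \frac{1}{2}(U + U^*)$, so every self-adjoint contraction is a mean of two unitaries. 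Splitting a general contraction into its real and imaginary parts and applying this brick already shows that $2A$ is a sum of four unitaries, which is essentially the $m = 4$ instance; the entire difficulty is to push the admissible norm all the way up to the sharp threshold $m - 2$.

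The engine I would use is an induction on $m \ge 3$ that peels off one unitary at a time. The key reduction is the following: given $C$ with $\|C\| \le R$ and $R \ge 2$, produce a unitary $U$ with $\|C - U\| \le R - 1$. Granting this, if $\|C\| < m - 2$ with $m \ge 4$ (so that I may take $R$ marginally below $m - 2$ and still have $R \ge 2$), I obtain a unitary $U$ with $\|C - U\| < (m-2) - 1 = (m-1) - 2$; by the inductive hypothesis $C - U$ is then a sum of $m - 1$ unitaries, whence $C = U + (C - U)$ is a sum of $m$ unitaries. The arithmetic identity $(m-2) - 1 = (m-1) - 2$ is precisely what makes the constant $1 - 2/m$ propagate correctly through the induction. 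For self-adjoint $C$ the reduction lemma is transparent: one sets $U = u(C)$ for a continuous unimodular function $u$ on $[-R, R]$ satisfying $|t - u(t)| \le R - 1$ throughout (for $R \ge 2$ one may take $u$ close to the sign function, interpolated continuously through the value $i$ near the origin), so that $\|C - U\| = \sup_{t \in \sigma(C)} |t - u(t)| \le R - 1$.

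The main obstacle is twofold, and it is where I expect the real work to lie. First, the reduction lemma must be established for \emph{non-normal} $C$, where the functional-calculus construction of a nearby unitary is simply unavailable; the standard remedy, going back to Russo--Dye and refined by Gardner and by Kadison--Pedersen, is to replace the single clean extraction by an iterative scheme in which one repeatedly subtracts off a scaled unitary and renormalizes, arranging the estimates so that the remainder remains a contraction and the process terminates after exactly $m$ steps with an \emph{honest} decomposition rather than a limit. The delicate point is that the set of means of $m$ unitaries need not be norm-closed, so one cannot merely pass to a limit as in the qualitative Russo--Dye density theorem; the quantitative bound $1 - 2/m$ must be tracked exactly through the iteration. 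Second, the induction requires the base case $m = 3$, that is, that every strict contraction is a sum of three unitaries, and this case falls below the range of the reduction lemma and must be treated directly. Once these two points are settled, the inductive scheme above completes the argument, which is the content of the cited theorem of Kadison and Pedersen.
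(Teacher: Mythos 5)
The paper itself offers no proof of this lemma; it is quoted directly from Kadison--Pedersen (their Theorem 1), so your proposal has to stand on its own --- and it does not, because the ``key reduction'' on which your entire induction rests is false, not merely difficult. You claim that for every $C$ with $\|C\| \le R$ and $R \ge 2$ there is a unitary $U$ with $\|C - U\| \le R - 1$. Take $\mathscr{A} = {\mathbb B}({\mathscr H})$ and $C = RS$, where $S$ is the unilateral shift. For any unitary $U$, the element $V = U^*S$ is a non-unitary isometry, and every non-unitary isometry has spectrum equal to the whole closed unit disk (Wold decomposition). Hence $\sigma(RV - I)$ is the closed disk of radius $R$ centered at $-1$, whose spectral radius is $R+1$, and therefore
\[
\|C - U\| \;=\; \|U^*(C - U)\| \;=\; \|RV - I\| \;\ge\; R + 1 .
\]
So the distance from $C$ to the unitary group equals $R+1$ exactly (attained at $U = -I$), the largest value it could possibly have; in particular no unitary lies within $R-1$ of $C$. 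Your inductive step already collapses at $m = 4$: for $C = 1.9\,S$, of norm $< m - 2 = 2$, there is no unitary $U$ with $\|C - U\| < 1$, even though $C$ genuinely is a sum of four unitaries. You flagged the non-normal case as ``where the real work lies,'' but the problem is not that the extraction is hard to construct there; it is that no such extraction exists.

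The actual Kadison--Pedersen argument never looks for a unitary near $C$; it peels from the other side. The two ingredients are: (i) every \emph{invertible} element $a$ with $\|a\| \le 2$ is a sum of two unitaries (polar decomposition $a = u|a|$, where $u$ is unitary because $a$ is invertible, combined with your self-adjoint ``brick'' applied to $|a|$); and (ii) consequently, if $u$ is unitary and $\|c\| < 1$, then $u + c = u(I + u^*c)$ is invertible of norm $< 2$, hence a sum of two unitaries --- that is, a strict contraction can always be \emph{absorbed} into a unitary. The base case $m = 3$ is then immediate: if $\|a\| < 1$, write $a = (I + a) + (-I)$ and apply (ii) to $I + a$. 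For the step from $m-1$ to $m$, given $\|a\| < m - 2$, split $a = b + c$ with $b = \frac{m-3}{m-2}\,a$ and $c = \frac{1}{m-2}\,a$, so that $\|b\| < m - 3$ and $\|c\| < 1$; by induction $b = U_1 + \cdots + U_{m-1}$, and then $a = U_1 + \cdots + U_{m-2} + (U_{m-1} + c)$ is a sum of $m$ unitaries by (ii). The element being absorbed is always small in norm, so no statement about the distance from a large, non-invertible element to the unitary group is ever needed --- which is precisely the obstruction your scheme cannot get around.
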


We are ready to present our main result.

\begin{theorem}\label{main}
Let $\Phi: \mathscr{A} \to {\mathbb B}({\mathscr H})$ be a unital
$n$-positive linear map between $C^*$-algebras for some $n \geq 3$.
Then
\begin{eqnarray}\label{main2} \|\Phi(AB)-\Phi(A)\Phi(B)\| \leq
\Delta(A,\|\cdot\|)\,\Delta(B,\|\cdot\|)
\end{eqnarray}
for all operators $A, B \in \mathscr{A}$.
\end{theorem}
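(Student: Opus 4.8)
The plan is to combine the three lemmas. Lemma~\ref{lemma1} gives the Cauchy--Schwarz-type bound
\begin{eqnarray*}
\|\Phi(AB)-\Phi(A)\Phi(B)\|^2 \leq
\|\Phi(|A^*|^2)-|\Phi(A^*)|^2\|\,\|\Phi(|B|^2)-|\Phi(B)|^2\|,
\end{eqnarray*}
so it suffices to control each factor on the right by the corresponding squared distance, i.e. to show
\begin{eqnarray}\label{target}
\|\Phi(|A^*|^2)-|\Phi(A^*)|^2\|\leq \Delta(A^*,\|\cdot\|)^2
=\Delta(A,\|\cdot\|)^2
\end{eqnarray}
for \emph{every} operator $A$, and symmetrically for $B$. (Here I use that $\Delta(A^*,\|\cdot\|)=\Delta(A,\|\cdot\|)$, which is immediate from $\|A^*-\bar\lambda I\|=\|A-\lambda I\|$.) Lemma~\ref{lemma2} already delivers exactly this bound, but only for \emph{normal} operators, so the entire difficulty is to remove the normality hypothesis while keeping $\Phi$ merely $n$-positive (so that the clean Stinespring argument of Lemma~\ref{lemma2} is unavailable in full).

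The key idea is to reduce an arbitrary operator to a combination of unitaries via Lemma~\ref{lemma3}, since unitaries are normal. First I would reduce to the case $\|A\|<1$ by homogeneity: both sides of \eqref{target} scale like $|A|^2$, so after replacing $A$ by a small positive multiple $tA$ we may assume $\|A\|<1-(2/m)$ for some integer $m>2$. Then Lemma~\ref{lemma3} writes $mA=U_1+\cdots+U_m$ with each $U_j$ unitary. The plan is to define the sesquilinear form $\langle\!\langle X,Y\rangle\!\rangle:=\Phi(X^*Y)-\Phi(X)^*\Phi(Y)$, which by the covariance--variance inequality \eqref{cov} is a positive-semidefinite operator-valued form. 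For such a form one has the operator Cauchy--Schwarz inequality and, crucially, the bound $\|\langle\!\langle X,X\rangle\!\rangle\|\leq \Delta(X,\|\cdot\|)^2$ holds whenever $X$ is normal (Lemma~\ref{lemma2}); in particular it holds for each unitary $U_j$, where $\Delta(U_j,\|\cdot\|)\leq \|U_j\|=1$.

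The main obstacle, and the crux of the argument, is that the variance functional $X\mapsto\|\langle\!\langle X,X\rangle\!\rangle\|$ is \emph{not} subadditive in an obvious way, so I cannot simply sum the unitary estimates. The right move is to exploit that for a positive form the associated seminorm $\|X\|_\Phi:=\|\langle\!\langle X,X\rangle\!\rangle\|^{1/2}$ does obey a triangle-type inequality at the level of the form (via $\langle\!\langle X+Y,X+Y\rangle\!\rangle\le 2\langle\!\langle X,X\rangle\!\rangle+2\langle\!\langle Y,Y\rangle\!\rangle$ and, more usefully, the genuine Cauchy--Schwarz cross-term control). Concretely I would expand $\langle\!\langle mA,mA\rangle\!\rangle=\sum_{i,j}\langle\!\langle U_i,U_j\rangle\!\rangle$ and estimate the off-diagonal terms by Cauchy--Schwarz, $\|\langle\!\langle U_i,U_j\rangle\!\rangle\|\le \|\langle\!\langle U_i,U_i\rangle\!\rangle\|^{1/2}\|\langle\!\langle U_j,U_j\rangle\!\rangle\|^{1/2}\le 1$; summing over the $m^2$ pairs gives $m^2\|\langle\!\langle A,A\rangle\!\rangle\|\le m^2$, i.e. $\|\langle\!\langle A,A\rangle\!\rangle\|\le 1$. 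After undoing the scaling this should yield $\|\langle\!\langle A,A\rangle\!\rangle\|\le\|A\|^2$, and then a translation $A\mapsto A-\lambda I$ (which leaves the form invariant because $\Phi$ is unital) optimized over $\lambda$ upgrades $\|A\|^2$ to $\Delta(A,\|\cdot\|)^2$, establishing \eqref{target}. I expect the delicate point to be organizing these estimates so that the constant comes out to exactly $\Delta(A,\|\cdot\|)^2$ rather than a worse multiple of $\|A\|^2$; getting the translation-invariance and the Cauchy--Schwarz bookkeeping to cooperate cleanly is where the real work lies. Feeding \eqref{target} and its $B$-analogue into Lemma~\ref{lemma1} then gives \eqref{main2} at once.
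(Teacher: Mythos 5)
Your proof is correct, but it is organized differently from the paper's. The paper never extends Lemma~\ref{lemma2} beyond normal elements: it first obtains \eqref{main2} for normal $A,B$ from Lemmas~\ref{lemma1} and~\ref{lemma2}, and then exploits the fact that $(A,B)\mapsto\Phi(AB)-\Phi(A)\Phi(B)$ is \emph{bilinear}, so that after writing $A=\frac{M}{m}\sum_{j}U_j$ via Lemma~\ref{lemma3} a plain triangle inequality over the $m$ terms (no cross-term control needed) gives $\|\Phi(AB)-\Phi(A)\Phi(B)\|\le\frac{m^2+2}{m^2-2m}\|A\|\,\|B\|$ for arbitrary $A$ and normal $B$; letting $m\to\infty$, repeating the decomposition in the second variable, and finally translating $A\mapsto A-\lambda I$, $B\mapsto B-\mu I$ yields the $\Delta$'s. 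You instead attack the \emph{quadratic} variance $\langle\!\langle A,A\rangle\!\rangle$, which forces you to control the $m^2$ cross terms via the norm Cauchy--Schwarz inequality \eqref{norm} (legitimately available, since it is established inside the proof of Lemma~\ref{lemma1} for $n\ge 3$). This works, and the point you flag as delicate is not actually an obstacle: for fixed $m$ the scaling gives $\|\langle\!\langle A,A\rangle\!\rangle\|\le\bigl(\tfrac{m^2+2}{m^2-2m}\bigr)^2\|A\|^2$, letting $m\to\infty$ gives exactly $\|A\|^2$, and translation invariance of the form (unitality plus linearity) then upgrades this to $\Delta(A,\|\cdot\|)^2$, so feeding the result and its adjoint version into Lemma~\ref{lemma1} closes the argument. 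Your route has the virtue of proving the stronger intermediate statement that $\|\Phi(A^*A)-\Phi(A)^*\Phi(A)\|\le\Delta(A,\|\cdot\|)^2$ for \emph{every} $A$ under mere $3$-positivity, which sharpens the Remark following Lemma~\ref{lemma2} (stated there only for completely positive maps); the paper's route is slightly more economical because bilinearity makes the cross-term bookkeeping unnecessary.
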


\begin{proof}
Using Lemmas \ref{lemma1} and \ref{lemma2} we obtain \eqref{main2}
for normal operators $A, B\in {\mathscr A}.$

\noindent Let $0 \neq A\in {\mathscr A}.$ Let $m>2$ be an integer
and $M=\frac{m^2+2}{m^2-2m}\|A\|$. Then
$$\|A/M\|= \frac{m^2-2m}{m^2+2}< 1-\frac{2}{m}\,.$$
By Lemma \ref{lemma3}, there are unitaries $U_1, \dots, U_m \in
{\mathscr A}$ such that $A=\frac{M}{m}\sum_{j=1}^{m}U_j$. Hence for
any normal operator $B \in {\mathscr A}$ we have
\begin{eqnarray*}
\|\Phi(AB)-\Phi(A)\Phi(B)\|&=& \frac{M}{m}\left\|\Phi\left(\sum_{j=1}^{m}U_jB\right)-
\Phi\left(\sum_{j=1}^{m}U_j\right)\Phi(B)\right\|\\
&\leq& \frac{m^2+2}{m^3-2m^2}\|A\|
\sum_{j=1}^{m}\|\Phi(U_jB)-\Phi(U_j)\Phi(B)\| \\
&\leq& \frac{m^2+2}{m^3-2m^2}\|A\|\sum_{j=1}^{m}\|U_j\|\|B\|
\qquad(\textrm{by~}\eqref{main2}~\textrm{for~normal~operators})\\
&\leq& \frac{m^2+2}{m^2-2m}\|A\|\,\|B\|.
\end{eqnarray*}
Letting $m\to \infty$, we infer that
\begin{eqnarray}\label{nov}
\|\Phi(AB)-\Phi(A)\Phi(B)\|\leq \|A\|\,\|B\|
\end{eqnarray}
for arbitrary $A$ and normal $B$. By repeating the same argument
for arbitrary $B$ and by using (\ref{nov}), we deduce that
$$\|\Phi(AB)-\Phi(A)\Phi(B)\|
\leq \|A\|\,\|B\|$$ for all $A, B \in \mathscr{A}$.

Next we observe that
\begin{eqnarray*}
\|\Phi(AB)-\Phi(A)\Phi(B)\| &=& \|\Phi\big((A-\lambda I)(B-\mu
I)\big)-\Phi(A-\lambda I)\Phi(B
-\mu I)\|\\
& \leq & \|A-\lambda I\|\,\|B-\mu I\|\\
\end{eqnarray*}
for all $\lambda,\mu\in \mathbb C.$ Thus,
$$\|\Phi(AB)-\Phi(A)\Phi(B)\|\leq \inf_{\lambda\in \mathbb
C}\|A-\lambda I\|\,\inf_{\mu\in\mathbb C}\|B-\mu
I\|=\Delta(A,\|\cdot\|)\,\Delta(B,\|\cdot\|)$$ for all $A,B\in
\mathscr A.$ This proves the theorem.
\end{proof}

\begin{remark}
Let us remark here that the inequality (\ref{main2}) does not have
to hold if $\Phi$ is assumed only to be a unital positive linear
map. To see this, let us choose $\Phi:\mathscr M_2\rightarrow
\mathscr M_2$ to be the transpose map. It is known (see e.g.
\cite{PAU}) that such a map is positive, but not $2$-positive.
Hence, $\Phi$ is not a $3$-positive map. Then, for matrices
$$A=\left[\begin{array}{cc}
1 & 2 \\
2 & 4
\end{array}\right]\,, \quad B=\left[\begin{array}{cc}
1 & 0 \\
0 & 4
\end{array}\right]
$$
we have
$$\|\Phi(AB)-\Phi(A)\Phi(B)\|=6.$$
Since $A$ and $B$ are positive matrices such that $\sigma
(A)=\{0,5\}$ and $\sigma (B)=\{1,4\},$ we conclude that
$\Delta(A,\|\cdot\|)=c(A)=\frac{5}{2}$ and
$\Delta(B,\|\cdot\|)=c(B)=\frac{3}{2}.$ Therefore,
$$\|\Phi(AB)-\Phi(A)\Phi(B)\|=6>\frac{15}{4}=\Delta(A,\|\cdot\|)\,\Delta(B,\|\cdot\|).$$

We do not know whether (\ref{main2}) holds if a map $\Phi$ is
assumed to be $2$-positive.
\end{remark}

Theorem \ref{main} extends the result obtained in \cite[Corollary
1]{P-R} to the case of $n$-positive linear maps, where $n\ge 3.$
Some applications of Theorem~\ref{main} on completely positive
maps were also given in \cite{P-R}. The first example of
$n$-positive map $(n\ge 2),$ which is not completely positive, was
obtained by Choi in \cite{CHO2}. He showed that the map $\Phi:
\mathscr M_k \to \mathscr M_k$ defined by
$$\Phi (T)=(k-1)\textup{tr}(T)I-T \quad (T\in \mathscr M_k)$$
is $(k-1)$-positive, but not $k$-positive. (Here 'tr' denotes the
trace.) Later, Takasaki and Tomiyama \cite{T-T} introduced a way
to construct new examples of $(k-1)$-positive linear maps from
$\mathscr M_k$ to $\mathscr M_k$ which are not $k$-positive.

In the next corollary we apply Theorem~\ref{main} on Choi's
$(k-1)$-positive linear map.

\begin{corollary}
Let $A,B\in \mathscr M_k,$ where $k\ge 4.$ Then
$$\|
(k^2-k-1)\textup{tr}(AB)I-kAB-(k-1)\textup{tr}(A)\textup{tr}(B)I
+\textup{tr}(B)A+\textup{tr}(A)B\|$$ $$\le
\frac{(k^2-k-1)^2}{k-1}\Delta(A,\|\cdot \|)\,\Delta(B,\|\cdot
\|).$$
\end{corollary}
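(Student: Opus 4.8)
The plan is to apply Theorem~\ref{main} to a suitably normalized version of Choi's map. Since Choi's map $\Phi(T)=(k-1)\textup{tr}(T)I-T$ satisfies $\Phi(I)=(k-1)\textup{tr}(I)I-I=\big((k-1)k-1\big)I=(k^2-k-1)I$, it fails to be unital. First I would set
$$\Psi=\frac{1}{k^2-k-1}\,\Phi,$$
which is well defined because $k^2-k-1>0$ for $k\ge 4$. By construction $\Psi(I)=I$, so $\Psi$ is unital, and since scaling a positive map by a positive constant preserves $n$-positivity, $\Psi$ inherits the $(k-1)$-positivity of $\Phi$. The hypothesis $k\ge 4$ guarantees $k-1\ge 3$, so Theorem~\ref{main} applies to $\Psi$ and yields
$$\|\Psi(AB)-\Psi(A)\Psi(B)\|\le \Delta(A,\|\cdot\|)\,\Delta(B,\|\cdot\|).$$

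Next I would compute $\Psi(AB)-\Psi(A)\Psi(B)$ explicitly. Writing $\Psi(AB)=\frac{1}{k^2-k-1}\big((k-1)\textup{tr}(AB)I-AB\big)$ and expanding the product
$$\Psi(A)\Psi(B)=\frac{1}{(k^2-k-1)^2}\big((k-1)\textup{tr}(A)I-A\big)\big((k-1)\textup{tr}(B)I-B\big),$$
I would collect all terms over the common denominator $(k^2-k-1)^2$. Combining the two contributions proportional to $AB$ produces the coefficient $-(k^2-k-1)-1=-k(k-1)$, and after gathering the remaining scalar and linear terms the entire bracket factors as $(k-1)$ times
$$(k^2-k-1)\textup{tr}(AB)I-kAB-(k-1)\textup{tr}(A)\textup{tr}(B)I+\textup{tr}(B)A+\textup{tr}(A)B,$$
which is precisely the expression $E$ appearing inside the norm in the statement. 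Thus $\Psi(AB)-\Psi(A)\Psi(B)=\frac{k-1}{(k^2-k-1)^2}\,E$.

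Finally I would rearrange this identity: taking norms gives $\|E\|=\frac{(k^2-k-1)^2}{k-1}\,\|\Psi(AB)-\Psi(A)\Psi(B)\|$, and combining it with the bound furnished by Theorem~\ref{main} yields the claimed inequality. The only delicate point is the bookkeeping in the algebraic expansion—specifically, verifying that the common factor $(k-1)$ divides out cleanly and that the coefficient of $AB$ collapses to exactly $k$ after the normalization—but this is a routine, if slightly tedious, computation rather than a conceptual obstacle.
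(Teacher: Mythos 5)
Your proposal is correct and follows exactly the paper's argument: the paper likewise defines the unital map $\Phi(T)=\frac{k-1}{k^2-k-1}\textup{tr}(T)I-\frac{1}{k^2-k-1}T$ (your $\Psi$), applies Theorem~\ref{main} using its $(k-1)$-positivity with $k-1\ge 3$, and verifies the identity $\Psi(AB)-\Psi(A)\Psi(B)=\frac{k-1}{(k^2-k-1)^2}E$. The algebra you sketch (the $AB$-coefficient collapsing to $-k(k-1)$ and the common factor $k-1$ dividing out) checks out.
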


\begin{proof}
Define
$$\Phi(T)=\frac{k-1}{k^2-k-1}\textup{tr}(T)I-\frac{1}{k^2-k-1}T
\quad (T\in \mathscr M_k).$$ By Theorem~\ref{main} we have
$$\|\Phi(AB)-\Phi(A)\Phi(B)\|\le
\Delta(A,\|\cdot\|)\,\Delta(B,\|\cdot\|)$$ since $\Phi$ is a
unital $(k-1)$-positive linear map. An easy computation shows that
$$\Phi(AB)-\Phi(A)\Phi(B)$$
$$=\frac{k-1}{(k^2-k-1)^2}[(k^2-k-1)\textup{tr}(AB)I-kAB-(k-1)\textup{tr}(A)\textup{tr}(B)I
+\textup{tr}(B)A+\textup{tr}(A)B],$$ from which the result
immediately follows.
\end{proof}

%--------------------------------------------------------------------------------------------------%

\end{document}